\newcommand*{\floorfrac}[2]{\mathopen{}\left\lfloor\frac{#1}{#2}\right\rfloor\mathclose{}}
\newcommand*{\abs}[1]{\lvert #1\rvert}
\newcommand{\diam}{diam}
\newtheorem{defi}{Definition}
\newtheorem{thr}[defi]{Theorem}
\newtheorem{prop}[defi]{Proposition}
\newtheorem{exam}[defi]{Example}
\newtheorem{q}[defi]{Question}
\newtheorem{remark}[defi]{Remark}
\newtheorem{claim}[defi]{Claim}
\newcommand*{\myproofname}{Proof}
\newenvironment{claimproof}[1][\myproofname]{\begin{proof}[#1]}{\end{proof}}
\title{\v Solt\'es' hypergraphs}
\author{Stijn Cambie \thanks{Department of Computer Science, KU Leuven Campus Kulak-Kortrijk, 8500 Kortrijk, Belgium. Supported by a postdoctoral fellowship by the Research Foundation Flanders (FWO) with grant number 1225224N.} }
\begin{document}
\parindent=0cm
\maketitle

\begin{abstract}
 More than $30$ years ago, \v Solt\'es observed that the total distance of the graph $C_{11}$ does not change by deleting a vertex, and wondered about the existence of other such graphs, called \v Solt\'es graphs.
 We extend the definition of \v Solt\'es' graphs to \v Solt\'es' hypergraphs,
 determine all orders for which a \v Solt\'es' hypergraph exists, observe infinitely many uniform \v Solt\'es' hypergraphs, and find the \v Solt\'es' hypergraph with minimum size (spoiler: it is not $C_{11}$).
\end{abstract}

\section{Introduction}

The total distance, which is proportional with the average distance, is one of the most important distance-based graph parameters. For a graph $G$, we denote the total distance with $W(G)$. Here $W(G)=\sum_{u,v \in V(G)} d(u,v)$ is the sum of distances between any (unordered) pair of vertices in a graph $G$.
We will examine one of the most easy-to-state questions related to the total distance.

In 1991, \v Solt\'es~\cite{Soltes91} observed that 
$W(C_{11})=W(C_{11}\setminus u)$ for every vertex $u \in V(C_{11})$,
and asked whether there are other graphs~$G$ (nowadays called \v Solt\'es' graphs) satisfying $W(G)=W(G\setminus u)$ for all vertices~$u$ of~$G$. 
It has been conjectured that $C_{11}$ is the only \v Solt\'es' graph (~\cite[conj.~51]{KST23}). 

One can generalize the notion of \v Solt\'es' graphs to \v Solt\'es' hypergraphs.
A hypergraph $H=(V,E)$, with $E \subset 2^V$, is a pair consisting of a set of vertices and a set of edges, where an edge is a set that can contain any number of vertices (we will assume at least $2$, as edges of size $1$ do not matter in computing the total distance).
More precise terminology is given in Subsection~\ref{subsec:not&def}.
A \v Solt\'es' hypergraph is a connected hypergraph $H$ for which $W(H)=W(H \setminus v)$ (which needs to be finite) for every $v \in V(H).$ 
Here $W(H)$ is the sum of distances over all pairs of vertices, where the distance $d(u,v)$ is the minimum number of edges in a connected subhypergraph of $H$ containing both $u$ and $v$.
By convention, $K_1$ is not considered as a \v Solt\'es' hypergraph.
In~\cref{sec:nonunif_Solteshypergraphs}, we prove that there exists a Solt\'es' hypergraph of order $n$ if and only if $n\ge 5$. The latter trivially implies that there are infinitly many Solt\'es' hypergraphs.

In~\cref{sec:unifSoltesH}, we consider the problem for uniform \v Solt\'es' hypergraphs. Those are hypergraphs for which all edges have the same cardinality (graphs are $2$-uniform hypergraphs). 
% Hence these are closer related to ordinary graphs, which are $2$-uniform hypergraphs.
We give two infinite examples of families of uniform \v Solt\'es' hypergraphs, 
indicating the abundance of solutions. 
% As such, the analogue of \v Solt\'es' problem for uniform hypergraphs may be hard to adress. 
By the discovery of a few examples of uniform \v Solt\'es' hypergraphs which have larger diameter (arguably, they behave like graphs in this context), the original version of \v Solt\'es' problem may expected to be hard and one can doubt that $C_{11}$ might be the only \v Solt\'es' graph. 

There still remain natural subproblems, such as determining the smallest uniform \v Solt\'es' hypergraphs.
We find two uniform \v Solt\'es' hypergraphs of order $10$ (hence $C_{11}$ is not the smallest uniform \v Solt\'es' hypergraph); the hemi-dodecahedron (see~\cref{fig:Petersen}) and the dual of $K_5^{(3)}$ (see~\cref{fig:BIBD}). We prove that the latter is the unique Solt\'es' hypergraph of minimum size.

\subsection{Notation and Terminology}\label{subsec:not&def}

A hypergraph $H=(V,E)$, where $E \subset 2^V$, consists of a set of vertices and a set of hyperedges. A hyperedge is defined as a set containing at least two vertices\footnote{Edges with a single vertex would not influence the total distance.} and called an edge when no confusion can arise. The number of vertices and edges are called the order and size respectively.
For a vertex $v \in V$, $H \setminus v$ denotes the hypergraph $(V \setminus v, E')$, where $E'$ includes all hyperedges in $E$ not containing $v$. In essence, $H \setminus v$ is obtained by removing $v$ and all hyperedges containing $v$.
The dual $H^\star=(E, V^\star)$ of a hypergraph $H=(V,E)$, is the hypergraph whose vertices are the edges of $H$, and whose hyperedges correspond with the tuples of all edges incident to a vertex of $V.$ A clique $K_n^{(r)}$ is the $r$-uniform hypergraph with a vertex set of order $n$, containing all possible hyperedges with $r$ elements.
So up to isomorphism, $K_n^{r}=\left( [n], \binom{[n]}{r} \right)$ where $[n]=\{1,2,\ldots,n\}.$

The degree of a vertex $v$, denoted by $\deg(v)$, is the number of hyperedges containing $v$. The minimum and maximum degrees, denoted as $\delta$ and $\Delta$, represent the smallest and largest degrees in the hypergraph.

A path in $H$ is represented by a sequence $(v_0, e_1, v_1, e_2, v_2, \ldots, e_r, v_r)$, where each $e_i$ is a hyperedge containing vertices $v_{i-1}$ and $v_i$ for every $i$, $v_0=u$, and $v_r=v$. No edge or vertex appears multiple times in the path, and $r$ denotes the length of the path.
The distance $d(u,v)$ between two vertices $u,v$ in a hypergraph $H$ is defined as the length of the shortest path between them. 

For a hypergraph $H$, the total distance or Wiener index, $W(H)$, is the sum of distances between any pair of vertices in $H$, i.e., $W(H)=\sum_{u,v \in V(H)} d(u,v)$. Equivalently, $W(H)$ equals the total distance $W(G')$ of the underlying graph $G'=(V,E')$, where $E'= \large\cup \{ \binom{e}{2} \mid e \in E\}$ comprises all $2$-uniform edges that are subsets of a hyperedge in $E$.

\section{Non-uniform \v Solt\'es' hypergraphs}\label{sec:nonunif_Solteshypergraphs}

Before proving the existence of non-uniform \v Solt\'es' hypergraphs of almost any order, we present a few small examples.

\begin{exam}\label{exam:5-8}
 Consider the hypergraphs in~\cref{fig:n=5-8}.
 Here edges are drawn with lines, and hyperedges with dashed ellipses.
 These satisfy 
 $W(H)=W(H \setminus v)=\binom{n}{2}$ for any $v \in V(H)$.
\end{exam}

\begin{figure}[h]
 \centering
 \begin{tikzpicture}[scale=0.75*1.05]
 \foreach \x in {0,72,...,288}{\draw[fill] (\x:1.5) circle (0.15);
\draw(\x+72:1.5) -- (\x:1.5);
}
\draw[dashed] (0,0) circle (2.25);
 \end{tikzpicture} \quad
 \begin{tikzpicture}[scale=0.75*1.05]
\foreach \x in {0.5,1.5}
{
\draw[fill] (0,\x) circle (0.15);
\draw[fill] (1,1) circle (0.15);
\draw (0,\x)--(-1,-1);
}
\foreach \x in {-0.5,-1.5}
{
\draw (0,\x)--(1,1);
\draw[fill=white] (0,\x) circle (0.15);
\draw[fill=white] (-1,-1) circle (0.15);
}
\draw[dashed] (1/3,1) circle (0.9);
\draw[dashed] (-1/3,-1) circle (0.9);
\draw[dashed] (0,0) circle (2.25);
 \end{tikzpicture} \quad
\begin{tikzpicture}[scale=0.75*1.05]
\foreach \x in {0.5,1.5}
{
\draw[fill] (0,\x) circle (0.15);
}
\draw[fill] (1.5,1)--(0,1.5);
\draw[fill] (-1.5,1)--(0,0.5);
\draw[fill] (1.5,1) circle (0.15);
\draw[fill] (-1.5,1) circle (0.15);
\foreach \x in {-0.5,-1,-1.5}
{
\draw (0,\x)--(1.5,1);
\draw (0,\x)--(-1.5,1);
\draw[fill] (0,\x) circle (0.15);
}
\draw[dashed] (0.65,1) circle (1.05);
\draw[dashed] (-0.65,1) circle (1.05);
\draw[dashed] (0,0.25) circle (2.25);

\draw[dashed] (0,-1) ellipse (0.3cm and 0.85cm);
 \end{tikzpicture} \quad
 \begin{tikzpicture}[scale=0.5625*1.05]
 \foreach \x in {0,45,...,315}{\draw[fill] (\x:2.5) circle (0.15);
\draw(\x+45:2.5) -- (\x:2.5);
\draw(\x+90:2.5) -- (\x:2.5);
}
\foreach \x in {0,90,180,270}
{
\draw[fill=white] (\x:2.5) circle (0.15);
}
\draw[dashed] (0,0) circle (3.0);
\foreach \x in {0,45}{
\draw[dashed] (\x+45:2.89) -- (\x-45:2.89)--(\x-135:2.89)--(\x+135:2.89)--cycle;
}
 \end{tikzpicture}

 \caption{Examples of non-uniform \v Solt\'es' hypergraphs for $n=5,6,7,8$}
 \label{fig:n=5-8}
\end{figure}
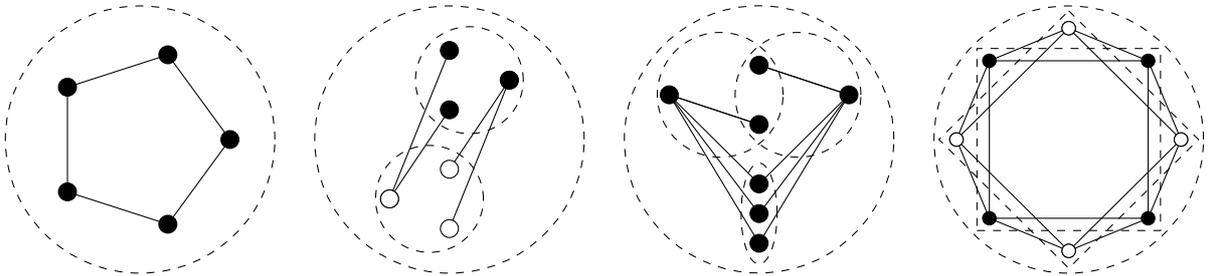

We give a few remarks for the hypergraph $H$ on $7$ and $8$ vertices. 

\textbf{Case $n=7$: } Since $H$ has three orbits, there are three computations needed to verify that $W(H \setminus v)=W(H)=\binom{7}2=21$ for every $v \in V(H).$ 
 %(\binom52+1)+10, \binom 62+2\cdot 3, (\binom42+1 + 2+2*(2*1+2*2)
 The hypergraph is non-regular, since its minimum degree equals $\delta=4$ and the maximum degree is $\Delta=6$. 
 This imply that some conjectures~\cite[conj.~47,48]{KST23} stating that a \v Solt\'es' graph is regular or vertex-transitive, are not true for the extension towards \v Solt\'es' hypergraphs.
 Also, $H$ contains twin-vertices, which cannot happen in a \v Solt\'es' graph.

\textbf{Case $n=8$: } The hypergraph on $8$ vertices is obtained from the circulant graph on $[8]$ with difference set $\{1,2\}$ together with hyperedges containing the even, odd and all vertices. Since the hypergraph is vertex-transitive, it is easy to verify that 
 $W(H)=\binom{n}{2}=28$ and $W(H \setminus v)=W(K_4)+W(P_3)+3\cdot(2 \cdot 2+2\cdot 1)=6+4+18=28$ for any $v \in V(H)$.

Next, we generalize the folklore result on the extrema of the Wiener index for graphs of a given order. This will be used afterwards to prove that no \v Solt\'es' hypergraph can have order bounded by $4$.

\begin{prop}\label{prop:folklore}
 For a connected hypergraph $H$ on $n$ vertices, we have
 $$\binom{n}{2}=W(K_n) \le W(H) \le W(P_n)=\binom{n+1}{3}.$$
 The upper bound is strict if $H \not= P_n$, while there are many hypergraphs attaining the minimum.
\end{prop}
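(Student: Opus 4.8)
The plan is to reduce the statement to its classical graph analogue and then reprove that analogue. Recall from the excerpt that $W(H)=W(G')$, where $G'$ is the underlying graph whose edges are the $2$-subsets of the hyperedges of $H$. Since $H$ is connected precisely when $G'$ is, and since every connected graph on $[n]$ is the underlying graph of some connected hypergraph (of itself, read as a $2$-uniform hypergraph), the set of values attained by $W(H)$ over connected hypergraphs on $n$ vertices equals the set attained by $W(G)$ over connected graphs $G$ on $n$ vertices. It thus suffices to prove $\binom n2\le W(G)\le\binom{n+1}3$ for connected graphs, with the appropriate equality analysis. The lower bound is immediate: each of the $\binom n2$ pairs of distinct vertices is at distance at least $1$, so $W(H)\ge\binom n2$, with equality exactly when every pair is at distance $1$, i.e.\ when $G'=K_n$. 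This is realised by many hypergraphs --- the single all-vertex hyperedge $V$, the complete graph $K_n$, any clique $K_n^{(r)}$, and more --- which is why the minimiser (unlike in the graph case) is far from unique.

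For the upper bound I would first pass to a spanning tree $T$ of $G'$. As $T$ is a connected subgraph of $G'$, we have $d_T(u,v)\ge d_{G'}(u,v)$ for all pairs and hence $W(G')\le W(T)$; moreover, if $G'$ is not a tree, then the endpoints of any non-tree edge are at distance at least $2$ in $T$, so reinserting that edge strictly decreases the total distance, giving $W(G')<W(T)$. It therefore remains to prove the tree case, namely $W(T)\le\binom{n+1}3$ with equality if and only if $T=P_n$.

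This tree maximality, together with its uniqueness, is the heart of the matter and the step I expect to be the main obstacle. I would induct on $n$ through leaf deletion. If $v$ is a leaf of $T$ with neighbour $u$, then
$$W(T)=W(T-v)+\Big(\sum_{w\neq v}d(u,w)\Big)+(n-1),$$
where the middle term is the single-source distance sum $D_{T-v}(u)$ of the tree $T-v$ on $n-1$ vertices. The induction hypothesis gives $W(T-v)\le\binom n3$, so the remaining ingredient is an auxiliary bound $D_{T'}(u)\le\binom{n-1}2$ valid for every vertex $u$ of every tree $T'$ on $n-1$ vertices. I would establish this by a depth-majorization argument: rooting $T'$ at $u$ and listing the vertex depths in nondecreasing order as $0=d_1\le d_2\le\cdots\le d_{n-1}$, every root-to-vertex path meets all smaller depths, so at least $d_i$ vertices precede the $i$-th, forcing $d_i\le i-1$; summing yields $D_{T'}(u)=\sum_i d_i\le\binom{n-1}2$, with equality iff the depths are exactly $0,1,\dots,n-2$, i.e.\ iff $u$ is an endpoint of a path.

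Combining the three estimates gives $W(T)\le\binom n3+\binom{n-1}2+(n-1)=\binom{n+1}3$. For uniqueness, equality throughout forces $W(T-v)=\binom n3$ (so $T-v=P_{n-1}$ by the inductive uniqueness) and $D_{T-v}(u)=\binom{n-1}2$ (so $u$ is an endpoint of $P_{n-1}$), whence $T=P_n$. Finally I transfer this back to hypergraphs: $W(H)=W(P_n)$ forces $G'=P_n$, and since $P_n$ is $K_3$-free it cannot be the underlying graph of a hyperedge of size $\ge 3$, so every hyperedge is a path edge and $H=P_n$ exactly. Hence the upper bound is strict whenever $H\neq P_n$, as claimed.
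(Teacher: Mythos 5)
Your proof is correct, but it follows a genuinely different route from the paper's. The paper inducts directly on the hypergraph: it deletes a minimum set $S$ of (twin-)vertices whose removal keeps $H$ connected, bounds the total contribution of $S$ by $s\binom{n-s+1}{2}$, and telescopes to $\binom{n+1}{3}$, with the equality analysis forcing $s=1$ and $H\setminus S=P_{n-1}$. You instead reduce everything to the underlying graph $G'$ via the identity $W(H)=W(G')$, pass to a spanning tree (noting strictness when $G'$ is not a tree), and then reprove the classical tree bound $W(T)\le\binom{n+1}{3}$ by leaf deletion together with the single-source estimate $D_{T}(u)\le\binom{n-1}{2}$, which you obtain by a depth-majorization argument ($d_i\le i-1$ after sorting depths). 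What your approach buys is completeness and transparency: every inequality comes with an explicit equality analysis, and in particular your final transfer step -- $G'=P_n$ is triangle-free, so no hyperedge can have size $\ge 3$, hence $H=P_n$ literally -- makes precise the point the paper dismisses with ``one easily concludes.'' What the paper's approach buys is brevity and the ability to stay entirely in the hypergraph setting (its set $S$ of twins handles in one stroke the multiple vertices that a large hyperedge can make interchangeable), though at the cost of leaving the existence of $S$ and the bound on its contribution rather tersely justified. Both arguments are sound; yours is the more self-contained of the two.
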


\begin{proof}
 The lower bound is trivial, since all distances are at least one.
 The upper bound is by induction.
 Let $S$ be a minimum set of (twin-)vertices such that $H \setminus S$ (remaining hypergraph after removing $S$ and edges incident with $S$) is connected. The existence of such a set can be concluded by starting from a vertex and adding overlapping edges one at a time. 
 Let $\abs S=s.$
 Then by induction, we have that 
 \begin{align*}
 W(H) &\le H(H \setminus S) + s \cdot \sum_{i=1}^{n-s} i\\ &= \binom{n-s+1}{3}+s \binom{n-s+1}{2}\\&\le
 \binom{n-s+1}{3}+ \sum_{i=n-s+1}^{n} \binom{i}{2}=\binom{n+1}{3}.
 \end{align*} 
 For equality to be possible, it is necessary that $s=1$ and $H \setminus S=P_{n-1}$ (by induction). From here one easily concludes that $H=P_n$, since the distances between the vertex $S$ and the vertices in $P_{n-1}$ have to be exactly $1$ up to $n-1.$
% The fact that the lower bound is attained by many hypergraphs, was also remarked at~\cite[rem.~2]{CGSTT23+}.
\end{proof}

We end this section with the main result, the determination of all orders for which a \v Solt\'es' hypergraph exists.

\begin{thr}\label{thr:smallestSH}
 The smallest \v Solt\'es' hypergraph has order $5$ and is unique. For every $n \ge 5$, there exists a \v Solt\'es' hypergraph of order $n.$
\end{thr}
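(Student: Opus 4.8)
The statement bundles three claims: no \v Solt\'es' hypergraph has order at most $4$; there is a unique one of order $5$; and one exists for every $n\ge 5$. The first two I would settle directly from \cref{prop:folklore}. For the lower bound, suppose $H$ is a \v Solt\'es' hypergraph on $n$ vertices. Since $W(H\setminus v)$ must be finite, each $H\setminus v$ is a connected hypergraph on $n-1$ vertices, so the upper bound of \cref{prop:folklore} gives $W(H)=W(H\setminus v)\le W(P_{n-1})=\binom{n}{3}$, while the lower bound gives $W(H)\ge\binom n2$. Hence $\binom n2\le\binom n3$, which forces $n\ge 5$; so nothing of order at most $4$ can work.

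For $n=5$ the same two inequalities read $\binom 52=10\le W(H)=W(H\setminus v)\le\binom 53=10$, so they are all tight: $W(H)=10$ and $W(H\setminus v)=W(P_4)$ for every $v$. Equality in the lower bound forces every pairwise distance in $H$ to be $1$ (diameter $1$), and equality in the upper bound of \cref{prop:folklore} forces $H\setminus v=P_4$ for every $v$. Since $P_4$ has only $2$-edges, I would argue that any hyperedge of size $\ge 3$ must be the full edge $V$, because otherwise it survives the deletion of some vertex outside it. Diameter $1$ together with $H\ne K_5$ then forces the full edge to be present, and the graph $G$ formed by the $2$-edges of $H$ must satisfy $G-v=P_4$ for all $v$; a degree/edge count (summing $\abs{E(G-v)}=3$ over all $v$) shows $G$ is $2$-regular on $5$ vertices, i.e. $G=C_5$. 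Thus $H=C_5$ plus the full edge, which one checks is indeed \v Solt\'es', giving existence and uniqueness at order $5$.

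For the existence part the guiding reformulation, suggested by all the examples in \cref{fig:n=5-8}, is to look for diameter-$1$ hypergraphs: write $H=G+V$, where $V$ denotes the full edge and $G$ carries the remaining edges. The full edge makes $W(H)=\binom n2$, and since it is destroyed by every deletion we have $H\setminus v=G-v$; the \v Solt\'es' condition then becomes exactly $W(G-v)=\binom n2$ for all $v$, i.e. deleting any vertex must raise the total distance above the minimum $\binom{n-1}2$ by precisely $n-1$. The plan is to seek a vertex-transitive $G$ on $\mathbb{Z}_n$, so that $W(G-v)$ is automatically independent of $v$ and one only has to hit the single target value $\binom n2$, built from a circulant backbone guaranteeing $2$-connectivity (hence $G-v$ connected) together with a few large hyperedges used to fine-tune the excess. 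The examples for $n=5,6,7,8$ (the last being $C_8(1,2)$ with the even, odd and full hyperedges) are exactly instances of this scheme and would serve as base cases.

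The hard part will be producing such a $G$ uniformly in $n$. There is a genuine reason ordinary graphs do not suffice: if $G$ were a regular graph of diameter $2$ with all excess coming from distance-$2$ pairs, the requirement would reduce to an equation of the form $(s-1)(n-2)=2(n-1)$ for the relevant integer parameter $s$, which has no solution for $n\ge 5$. Hyperedges escape this integrality wall, because deleting a vertex from a hyperedge on a set $S$ leaves the $\binom{\abs{S}-1}{2}$ pairs inside $S\setminus v$ at distance $1$, a count no longer tied linearly to a fixed degree. I therefore expect the real work to be a case analysis on the residue of $n$ modulo a small number, in each case exhibiting an explicit vertex-transitive hypergraph whose deletion excess is exactly $n-1$ and verifying this by a direct distance computation as in the $n=8$ example, with the four given examples anchoring the smallest cases. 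Should a single clean family prove elusive, an alternative is to fix finitely many base orders and design a vertex-addition gadget that raises the order by a constant while preserving both diameter $1$ and the exact excess $n-1$, reducing the whole problem to checking the gadget once.
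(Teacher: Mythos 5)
Your handling of orders $n\le 5$ is correct and is essentially the paper's argument, carried out in slightly more detail: the two bounds of \cref{prop:folklore} give $\binom n2\le W(H)=W(H\setminus v)\le W(P_{n-1})=\binom n3$, which forces $n\ge 5$, and at $n=5$ the forced equalities pin down $C_5$ together with the full $5$-edge (your intermediate steps --- every hyperedge of size $\ge 3$ must be $V$ itself since $H\setminus v=P_4$ has only $2$-edges, and the $2$-edges form a $2$-regular graph --- are a sound expansion of what the paper merely asserts). Your reformulation of the existence problem is also exactly the paper's: include the full edge $V$ so that $W(H)=\binom n2$, and then the \v Solt\'es' condition becomes $W(H\setminus v)=\binom n2$, i.e.\ an excess of exactly $n-1$ over $\binom{n-1}2$; your divisibility observation explaining why ordinary regular graphs cannot achieve this excess is a nice (correct) motivation for why genuine hyperedges are needed.

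However, the existence claim for every $n\ge 9$ is not proved. At the decisive point you write that you ``expect the real work to be a case analysis on the residue of $n$ modulo a small number'' and that, failing a clean family, one could ``design a vertex-addition gadget''; neither construction is actually exhibited or verified, so beyond the four pictured orders $5\le n\le 8$ nothing is established. This is the bulk of the theorem. The paper supplies an explicit family: partition $V$ into $\bfloor{n/3}$ parts $A_i$ of size $3$ or $4$, add the complementary hyperedges $E_i=V\setminus A_i$ together with the full edge $V$, and choose a graph $G$ with prescribed degrees inside each $A_i$ and towards $E_i$ (depending on the parities involved) so that, after deleting $v\in A_i$, the surviving hyperedge $E_i$ is a clique in $H\setminus v$ and the number of pairs at distance $2$ is exactly $n-1$. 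To complete your proof you would need to produce and verify such a construction (or your proposed gadget) for every $n\ge 9$; as written, this part is a plan rather than a proof.
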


\begin{proof}
 For $2 \le n \le 4$, by~\cref{prop:folklore}, $W(H \setminus v)\le \binom{n}{3}<\binom{n}{2} \le W(H)$ if both $H$ and $H \setminus v$ are connected (which is a necessary condition).
 For $n=5$, equality needs to be attained, so $H\setminus v \cong P_4$ for ever $v$ and $W(H)=10$, implying that $H$ is exactly equal to a $C_5$ with one hyperedge containing the $5$ vertices. 

 For $5 \le n \le 8$, examples of \v Solt\'es' hypergraphs are depicted in~\cref{fig:n=5-8}.
 %For $10 \le n \le 12,$ examples of uniform \v Solt\'es' hypergraphs will be given in~\cref{sec:unifSoltesH}.

 Finally, we show that there are \v Solt\'es' hypergraphs of order $n$ for every $n \ge 9$, by giving an example, which actually depends on $n \pmod {12}.$

 Partition the vertex set $V$ in $\floorfrac{n}3$ sets $A_i$ of size $3$ or $4.$
 Let $E_i=V \setminus A_i$ for every $1 \le i \le \floorfrac n3$.
 We start with constructing a graph $G$ of order $n$.
 
 For a set $A_i$ of size $4$, we choose $G[A_i]$ to be a $m$-factor, where $n-1 \equiv 3-m \pmod{3}$ and every vertex from $A_i$ has $\frac{2(n-4)-m}3$ neighbours in $E_i$. Note that every two of them have at least one common neighbour since $2\frac{2(n-4)-m}3>n-4$.

 For a set $A_i$ of size $3$, if $2 \mid n-1$, we choose either 
 \begin{itemize}
 \item $G[A_i]$ to be a triangle $K_3$ and each vertex in $A_i$ has $(n-3)-\frac{n-1}{2}$ neighbours in $E_i$
 \item $G[A_i]$ to be a single edge, the two vertices of the $K_2$ in $A_i$ both having $(n-3)-\frac{n-1}{2}$ neighbours in $E_i$ and let the third vertex have degree $(n-3)-\frac{n-1}{2}+1$ and have a common neighbour with each of the other vertices.
 \end{itemize}

 For a set $A_i$ of size $3$, if $2 \mid n-2$, we choose either
 \begin{itemize}
 \item $G[A_i]$ to be an empty graph $3K_1$ and each vertex in $A_i$ has $(n-3)-\frac{n-2}{2}$ neighbours in $E_i$ (such that every two vertices of $A_i$ have a common neighbour) 
 \item $G[A_i]$ to be a $P_3$, where the two endvertices in $A_i$ have $(n-3)-\frac{n-2}{2}$ neighbours in $E_i$ (at least one in common) and the third vertex of $A_i$ has degree $(n-3)-\frac{n}{2}.$
 \end{itemize}

 Since $n>8$, there is at least one $A_i$ of size $3$ and so the choices above can be made in such a way that the sum of degrees is even.
 There are multiple ways to see that a valid construction is possible for every order $n \ge 8$. One can e.g. do so by giving explicit constructions depending on $n \pmod{12}.$
\begin{comment}
 \textbf{Case $3 \mid n$}
 Partition the vertex set $V$ in $\frac{n}3$ triples $A_i.$
 We start with an underling graph depending on $n \pmod{12}.$
 \begin{itemize}
 \item If $6 \mid n,$ let $G$ be an $r$-regular graph $G$, where $r= \frac{n}{2}-2$ on $V$ for which $G[A_i]$ induces the empty set for every $i$ and every $2$ vertices in $A_i$ have a common neighbour.
 \textcolor{red}{also here a modification if n/2 is odd}
 \item If $n \equiv 3 \pmod{12}$, let $G$ be an $r$-regular graph $G$, where $r= \frac{n-3}{2}$ on $V$ for which $G[A_i]$ induces a single edge for every $i$ and every $2$ vertices in $A_i$ have a common neighbour.
 \item If $n \equiv 9 \pmod{12}$, let $G$ be an $r$-regular graph $G$, where $r= \frac{n-1}{2}$ on $V$ for which $G[A_i]$ induces a $K_3$ for every $i$.
 \end{itemize}
 Add a hyperedge $E_i=V \setminus A_i$ for every $1 \le i \le \frac n3$ and let also $V$ be a hyperedge itself,
 to form a hypergraph $H.$
 Now one can check that $W(H)=W(H \setminus v)$ in all cases.
 
 \textbf{Case $n \equiv 1 \pmod 3$}
 
 \textbf{Case $n \equiv 2 \pmod 3$}

\end{comment}
 Finally, add the hyperedges $E_i$ for every $1 \le i \le \frac n3$ and $V$ to $G$,
 to form a hypergraph $H.$
 Now one can check that $W(H)=W(H \setminus v)=\binom n2$ in all cases.
 Since $H \setminus v$ has diameter $2$, it is sufficient to check that there are $n-1$ pairs of vertices at distance $2$. If $v \in A_i,$ $E_i$ is a clique in $H \setminus v$, thus one needs to count the non-neighbours of the remaining vertices in $A_i$. The $n-1$ pairs are now obtained by construction.
 If $\abs{A_i}=4$, this is immediate by construction, since there are $3-m$ pairs with two vertices in $A_i$ at distance $2$ and $n-4+m=(n-1)-(3-m)$ nonadjacent pairs of vertices in $A_i \times E_i.$
 If $\abs{A_i}=3,$ the same conclusion holds since $2\frac{n-1}{2}, 1+2\frac{n-2}{2}, \frac n2+\frac{n-2}{2}$ are all equal to $n-1$.
 %(i.e. in all cases it is verified)

 Examples of such hypergraphs are depicted in~\cref{fig:n9&10} for $n \in \{9,10\}$ (where two hyperedges for the union of two colour classes (sets $A_i$) are not depicted for clarity of the drawing).
\end{proof}

\begin{figure}[h]
 \centering
 \begin{tikzpicture}[scale=1.05*1.05]
 \foreach \x in {0,40,...,320}{\draw[fill] (\x:2.5) circle (0.15);
\draw(\x+40:2.5) -- (\x:2.5);
\draw(\x+120:2.5) -- (\x:2.5);
}
\draw[dashed] (0,0) circle (3.06);
\foreach \x in {0,120,240}
{
\draw[fill=white] (\x:2.5) circle (0.15);
\draw[fill=red] (\x+40:2.5) circle (0.15);
}
\foreach \x in {0}{
\draw[dashed] (\x+40:2.89) -- (\x-40:2.89)--(\x-80:2.89)--(\x-160:2.89)--(\x+160:2.89)--(\x+80:2.89)--cycle;
}
 \end{tikzpicture} \quad
 \begin{tikzpicture}[scale=1.05]
 \foreach \x in {2}{
 \foreach \y in {0}{
 \draw[black!20!white](-0.5+0.5*\x,0.866-0.866*\x) -- (3.5+0.5*\y,-0.866+0.866*\y);
 };
 }

 \foreach \z in {2.5,3.5}{
 \foreach \y in {0,1,2}{
 \draw[black!20!white](\z,4*0.866) -- (3.5+0.5*\y,-0.866+0.866*\y);
 };
 }

 \foreach \z in {0.5}{
 \foreach \y in {1}{
 \draw[black!60!white](\z,4*0.866) -- (3.5+0.5*\y,-0.866+0.866*\y);
 };
 }

 \foreach \z in {1.5}{
 \foreach \y in {2}{
 \draw[black!60!white](\z,4*0.866) -- (3.5+0.5*\y,-0.866+0.866*\y);
 };
 }

 \draw (0.5,-0.866)--(3.5,-0.866);
 \foreach \z in {0.5,1.5}{
 \foreach \x in {0,1,2}{
 \draw[black!20!white](\z,4*0.866) -- (-0.5+0.5*\x,0.866-0.866*\x) ;
 };
 }

 \foreach \z in {2.5}{
 \foreach \x in {0}{
 \draw[black!60!white](\z,4*0.866) -- (-0.5+0.5*\x,0.866-0.866*\x) ;
 };
 }
 
 \foreach \z in {3.5}{
 \foreach \x in {1}{
 \draw[black!60!white](\z,4*0.866) -- (-0.5+0.5*\x,0.866-0.866*\x) ;
 };
 }

 \foreach \x in {0,1,2}{
 \draw[fill=white] (-0.5+0.5*\x,0.866-0.866*\x) circle (0.15);
 }
 \foreach \y in {0,1,2}{
 \draw[fill=red] (3.5+0.5*\y,-0.866+0.866*\y) circle (0.15);
 }
 \foreach \z in {0.5,1.5,2.5,3.5}{
 \draw[fill] (\z,4*0.866) circle (0.15);
 }

 \draw[dashed] (-0.8,1.1) -- (0.2,-1.1)--(3.8,-1.1)--(4.8,1.1)--cycle;

 \draw[dashed] (2,4/3) circle (3.23);

 \end{tikzpicture}
 
 \caption{\v Solt\'es' hypergraphs $H$ for $n=9, 10$}
 \label{fig:n9&10}
\end{figure}
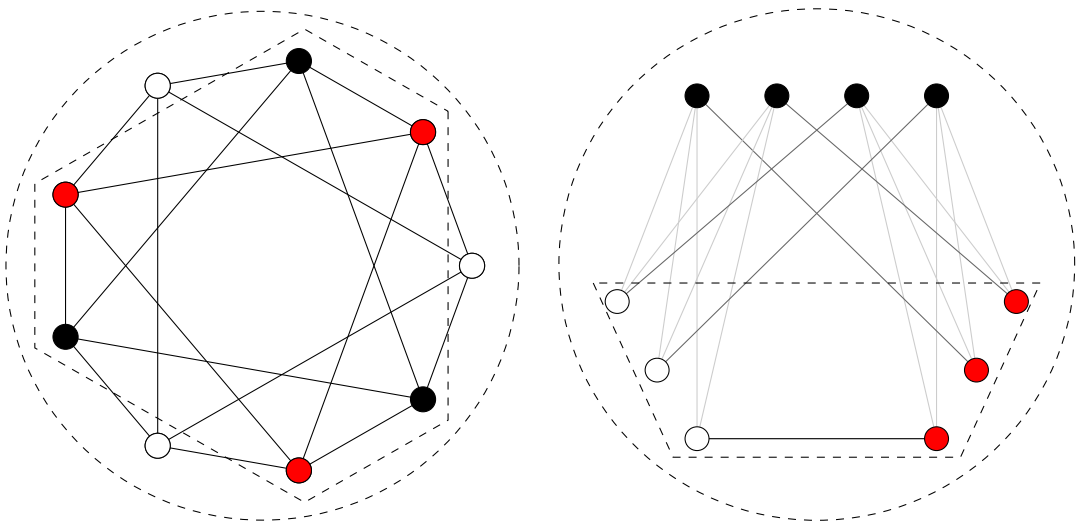

We conclude that there are numerous examples of non-uniform \v Solt\'es' hypergraphs, and additional constructions are presented in~\cref{sec:app}. % e.g. an infinite family of non-uniform \v Solt\'es' hypergraphs with diameter $2$ instead of $1$.
To narrow our focus and potentially gain deeper insights, we turn our attention to a specific subclass of hypergraphs. Considering that graphs correspond to $2$-uniform hypergraphs, the most natural class to explore is that of uniform hypergraphs. Our hope is that understanding uniform \v Solt\'es' hypergraphs will provide valuable insights in the search for \v Solt\'es' graphs. This is exactly the scope of the next section.

\section{Uniform \v Solt\'es' hypergraphs}\label{sec:unifSoltesH}

In this section, we prove that there are (infinitely many) uniform \v Solt\'es' hypergraphs. We do so by constructing two infinite families; one with increasing uniformity and one with fixed uniformity.

\begin{prop}\label{prop:cyclelikeSH}
    There are infinitely many values of $k$ for which there exists a $k$-uniform \v Solt\'es' hypergraph.
\end{prop}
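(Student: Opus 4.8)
The plan is to realize the role of $C_{11}$ in the $2$-uniform setting by its natural $k$-uniform analogue, the \emph{tight cycle} (equivalently a cyclic ``co-interval'' hypergraph). For $q:=k-1$ and $n>k$, let $C_n^{(k)}$ be the $k$-uniform hypergraph on vertex set $\mathbb{Z}_n$ whose edges are the $n$ sets of $k$ cyclically consecutive vertices, i.e. $\{i,i+1,\dots,i+k-1\}$ for $i\in\mathbb{Z}_n$ (equivalently, each edge is the complement of a window of $n-k$ consecutive vertices). This hypergraph is vertex-transitive under the rotation of $\mathbb{Z}_n$, so $W(C_n^{(k)}\setminus v)$ is independent of $v$ and it suffices to verify the single equation $W(H)=W(H\setminus v)$ for one $v$. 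The strategy is then to compute both Wiener indices as functions of $n$ and $q$ and to exhibit an infinite family of pairs $(n,k)$ for which they coincide.

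For the first computation I would observe that two vertices of $C_n^{(k)}$ lie in a common edge exactly when their cyclic distance is at most $q$, so in the regime $\lfloor n/2\rfloor\le q$ (that is, $n\le 2q+1$) the underlying graph is complete and $W(H)=\binom{n}{2}$. For the second, deleting $v$ (say $v=0$) together with its $k$ incident edges leaves precisely the edges contained in $\{1,\dots,n-1\}$, which form a \emph{tight path} on these $n-1$ linearly ordered vertices; here $d(a,b)=\lceil \abs{a-b}/q\rceil$. In the same regime this path has diameter $2$, so every pair is at distance $1$ or $2$, and hence $W(H\setminus v)=\binom{n-1}{2}+(\text{number of pairs at distance }2)$. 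Counting the pairs $\{a,b\}$ with $q<\abs{a-b}\le n-2$ gives $\sum_{e=1}^{\,n-1-q} e=\binom{n-1-q}{2}$, so that $W(H\setminus v)=\binom{n-1}{2}+\binom{n-1-q}{2}$.

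Comparing with $W(H)=\binom{n-1}{2}+(n-1)$, the \v Solt\'es' condition reduces to the single Diophantine identity $\binom{n-1-q}{2}=n-1$. Writing $s:=n-1-q$ this is $\binom{s}{2}=s+q$, which I would solve by taking, for each integer $s\ge 5$,
\[
q=\frac{s(s-3)}{2},\qquad k=q+1=\binom{s-1}{2},\qquad n=q+s+1=\binom{s}{2}+1.
\]
One then checks the regime hypotheses: $n\le 2q+1$ amounts to $s\le q$, i.e. $s^2\ge 5s$, which holds for $s\ge 5$, and $n-2>q$ holds since $s\ge 2$; thus $C_n^{(k)}$ is indeed a $k$-uniform \v Solt\'es' hypergraph. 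As $s$ ranges over $\{5,6,7,\dots\}$ the uniformity $k=\binom{s-1}{2}$ takes infinitely many values, which proves the proposition. (As a sanity check, $s=5$ yields $k=6$ and $n=11$, the tight cycle $C_{11}^{(6)}$, with $W=55$ on both sides.)

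The hard part is not the computation itself but locating the construction in the correct regime. A continuum estimate gives $W(H)\approx n^3/(8q)$ and $W(H\setminus v)\approx n^3/(6q)$, whose ratio $3/4$ is bounded away from $1$; so the two indices are \emph{never} close when $n$ is large compared with $q$, and exact equality can only come from the discrete ``dense'' regime $n\le 2q+1$, where the ceilings collapse and everything reduces to counting the few distance-$2$ pairs. Identifying that regime and then solving $\binom{n-1-q}{2}=n-1$ over the integers is the crux; once this is set up correctly, the remaining verification is routine.
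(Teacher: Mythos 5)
Your proposal is correct and is essentially the paper's own proof: the cyclic-interval (tight-cycle) hypergraph with $k=\binom{s-1}{2}$ and $n=\binom{s}{2}+1$ is exactly the paper's construction with $m=s-1$, verified the same way (diameter $1$ for $H$ since $2k-1\ge n$, diameter $2$ for the tight path $H\setminus v$, and a count of the distance-$2$ pairs). The only blemish is a typo in the index of summation, which should run to $n-2-q$ rather than $n-1-q$, though the stated value $\binom{n-1-q}{2}$ is correct.
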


\begin{proof}
    Let $m \ge 4,$ $k=\binom{m}{2}$ and $n=\binom{m+1}{2}+1.$
    Let $H$ be the hypergraph with $V=[n]$ and $E$ be the edges consisting of $k$ consecutive (modulo $n$) numbers in $[n]$.
    Then $H$ is vertex-transitive and $W(H)=\binom{n}{2}$, as $\diam (H)=1$ due to $2k-1 \ge n$). For any $v \in V$,
    $W(H \setminus v) = \binom{n-1}2+\sum_{i=1}^m i =\binom n2.$
    The latter since $H \setminus v$ has diameter $2$ and it is equal to a path for which the first $m$ vertices have resp. $m,m-1 \ldots, 1$ non-neighbours in $V \setminus v.$
\end{proof}

\begin{prop}\label{prop:Steiner24nSH}
 There are infinitely many $4$-uniform \v Solt\'es' hypergraphs.% with diameter $1$. 
\end{prop}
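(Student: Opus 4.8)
The plan is to let $H$ be a Steiner system $S(2,4,n)$, that is, a $4$-uniform hypergraph on $n$ vertices in which every pair of vertices lies in exactly one edge (block). By Hanani's classical existence theorem, such systems exist precisely when $n \equiv 1$ or $4 \pmod{12}$, so there are infinitely many of them, and each is automatically $4$-uniform. The claim will then follow by showing that any such $H$ is a \v Solt\'es' hypergraph. The first, easy observation is that because every pair of vertices already shares an edge, $\diam(H)=1$ and hence $W(H)=\binom{n}{2}$; this handles the $W(H)$ side completely, so everything reduces to computing $W(H\setminus v)$.

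The key structural fact I would extract is that the edges through a fixed vertex $v$ partition $V\setminus\{v\}$ into $(n-1)/3$ triples: two blocks through $v$ meet only in $v$ (else the pair would lie in two blocks), and every other vertex is paired with $v$ in exactly one block. Deleting $v$ removes exactly these blocks, so the only pairs that lose a common edge are the $3$ pairs inside each such triple, giving $n-1$ lost pairs in total, while all other $\binom{n-1}{2}-(n-1)$ pairs remain at distance $1$. The heart of the argument is to show that each lost pair sits at distance exactly $2$ in $H\setminus v$. Here the Steiner property pays off: a vertex $u$ loses adjacency only to its two triple-mates $w,x$ (the unique block through $\{u,v\}$ is $\{v,u,w,x\}$), so $u$ is still adjacent to every vertex outside $\{u,w,x\}$. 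For a lost pair $\{u,w\}$ in the triple $\{u,w,x\}$, any vertex $z\notin\{u,w,x\}$ is then a common neighbour (valid since $n\ge 13$), and the two connecting edges are distinct because a single edge through $u,w,z$ would have to be the deleted block $\{v,u,w,x\}$; this yields a genuine length-$2$ path and so $d(u,w)=2$.

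With the distance census in hand, the computation closes immediately: $W(H\setminus v)=\bigl[\binom{n-1}{2}-(n-1)\bigr]\cdot 1+(n-1)\cdot 2=\binom{n-1}{2}+(n-1)=\binom{n}{2}=W(H)$, and the same holds for every $v$ by the uniformity of the design, so $H$ is a \v Solt\'es' hypergraph. I expect the main obstacle to be the distance-$2$ verification — one must rule out that a lost pair drifts to distance $\ge 3$ and confirm a legitimate hypergraph path of length $2$ (two \emph{distinct} edges) exists — but the rigidity of $S(2,4,n)$, namely that each vertex forfeits only two adjacencies, makes this step clean rather than computational. The remaining points (connectivity of $H\setminus v$, finiteness of $W(H\setminus v)$) are then free consequences of the diameter being $2$.
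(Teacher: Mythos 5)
Your proposal is correct and follows essentially the same route as the paper: take a Steiner system $S(2,4,n)$ for $n\equiv 1,4\pmod{12}$, note $\diam(H)=1$, observe that the blocks through $v$ partition $V\setminus\{v\}$ into $(n-1)/3$ triples whose internal pairs move to distance exactly $2$, and compute $W(H\setminus v)=\binom{n-1}{2}+(n-1)=\binom{n}{2}$. Your verification that the length-$2$ path uses two genuinely distinct surviving edges is slightly more explicit than the paper's, but it is the same argument.
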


\begin{proof}
 Consider a Steiner $2$-design $H=S(2,4,n)$ with block size $4$, for some $n \equiv 1,4 \pmod{12}$ and $n \ge 13$. They exist for all such $n$, and there are many of them for large $n$, see~\cite{RR10}.
 
 Every $2$ vertices share exactly one edge, thus $\diam(H)=1$. If $u,w$ are two vertices such that $\{u,v,w\}$ are part of an edge $e$ in $H$, then for a vertex $z \not \in e$, there are two disjoint edges (in $G \setminus v$ which contain $\{z,u\}$ and $\{z,w\}$.
 This implies that for every vertex $v \in V(H),$
 $$W(H\setminus v)= \binom{n-1}{2}+\frac{n-1}{3}\binom{3}{2}=\binom{n-1}{2}+(n-1)=\binom{n}2=W(H).$$
 
 Thus $H$ is indeed a \v Solt\'es' hypergraph, which has diameter $1$ and is $4$-uniform.
\end{proof}

The examples in the above families all have diameter $1$,
making the computations easier.
We remark that there are no $3$-uniform \v Solt\'es' hypergraphs with diameter $1$. In that regard, finding a $3$-uniform \v Solt\'es' hypergraph may be somewhat equally difficult as finding a \v Solt\'es' graph. 
Finding one, or excluding the existence seems a hard question.
From any cubic arc-transitive graph $G$, one can derive a $2$-regular $3$-uniform hypergraph $H$ with $V(H)=E(G)$ and $E(H)$ being exactly equal to the triangles in the line graph of $G$.
Using the census on cubic arc-transitive graphs~\cite{CD02}, see \url{https://www.math.auckland.ac.nz/~conder/AllSymmCubic-OrderUpTo10000-ByEdgeSets.txt},
by taking the dual,
we find many examples of $3$-uniform hypergraphs $H$ for which $W(H)-W(H \setminus v)$ can be either positive or negative, and there are examples for which it is exactly equal to $1$ or $-1$, or near zero compared with its order. See~\cite[doc. \text{arctrans10000\_3unif}]{C24}.

\begin{prop}\label{prop:no3unifdiam1SH}
    There are no $3$-uniform \v Solt\'es' hypergraphs with diameter $1.$
\end{prop}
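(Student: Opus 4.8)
The plan is to assume for contradiction that $H$ is a $3$-uniform \v Solt\'es' hypergraph of diameter $1$ on $n$ vertices, and then play two opposing estimates on the number of \emph{privately covered} pairs (pairs of vertices lying in exactly one edge) against each other. Since $\diam(H)=1$, every pair is at distance $1$, so $W(H)=\binom{n}{2}=\binom{n-1}{2}+(n-1)$, and the \v Solt\'es' condition becomes
$$\mathrm{exc}(v):=\sum_{\{a,b\}\subseteq V\setminus v}\bigl(d_{H\setminus v}(a,b)-1\bigr)=n-1 \qquad\text{for every }v\in V.$$
First I would record the elementary but crucial observation that, because edges have size $3$, a pair $\{a,b\}$ satisfies $d_{H\setminus v}(a,b)\ge 2$ if and only if $\{v,a,b\}$ is the unique edge through $\{a,b\}$; I call such a pair \emph{broken by $v$}. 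Passing to the underlying graph $G_v'$ of $H\setminus v$ (whose distances agree with those of $H\setminus v$), this says precisely that $G_v'$ is the complement of the graph $P(v)$ whose edges are the pairs broken by $v$. I write $B(v):=\abs{E(P(v))}$ for the number of broken pairs, so that $\mathrm{exc}(v)$ is a sum of exactly $B(v)$ terms, each at least $1$.

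Next I would set up the cheap half, a global double count. Each privately covered pair $\{a,b\}$ has a unique third vertex, its \emph{apex}, and is broken by that apex and by no other vertex; summing over apices therefore gives $\sum_{v\in V}B(v)=U$, where $U$ is the total number of privately covered pairs. As there are only $\binom{n}{2}$ pairs in all, we get $U\le\binom{n}{2}$.

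The decisive step is a matching lower bound, $B(v)\ge n-2$ for every $v$. I would prove it by contradiction inside a fixed $v$: if $B(v)\le n-3$, then since $\mathrm{exc}(v)=n-1>B(v)$ is a sum of $B(v)$ terms each at least $1$, some broken pair $\{a,b\}$ must have $d_{H\setminus v}(a,b)\ge 3$. In $G_v'=\overline{P(v)}$ this means $a$ and $b$ have no common neighbour, so every other vertex of $V\setminus v$ is a $P(v)$-neighbour of $a$ or of $b$; together with $a\in N_{P(v)}(b)$ and $b\in N_{P(v)}(a)$ this forces $N_{P(v)}(a)\cup N_{P(v)}(b)=V\setminus v$, whence $\deg_{P(v)}(a)+\deg_{P(v)}(b)\ge n-1$. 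Counting the edges of $P(v)$ meeting $\{a,b\}$ by inclusion–exclusion (they overlap only in the edge $\{a,b\}$) then yields $B(v)\ge\deg_{P(v)}(a)+\deg_{P(v)}(b)-1\ge n-2$, contradicting $B(v)\le n-3$. I expect this distance-$\ge 3$ analysis to be the main obstacle: it is exactly the place where one must rule out the a priori possibility that the excess $n-1$ is produced cheaply by a few far-apart pairs instead of by many broken pairs, and it is the size-$3$ constraint (which makes the broken pairs coincide with the non-edges of $G_v'$) that makes the neighbourhood-covering argument go through.

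Finally I would combine the two estimates: $n(n-2)\le\sum_{v}B(v)=U\le\binom{n}{2}$, that is $n-2\le\tfrac{n-1}{2}$, which forces $n\le 3$. For $n\le 3$ no $3$-uniform hypergraph of diameter $1$ can be \v Solt\'es' (the only candidate, a single edge on three vertices, becomes disconnected after deleting a vertex), so no $3$-uniform \v Solt\'es' hypergraph of diameter $1$ exists. It remains only to verify the boundary technicalities (connectivity of each $G_v'$, and that every small order is genuinely excluded), but these are immediate once the inequality $B(v)\ge n-2$ is established.
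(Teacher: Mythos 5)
Your proof is correct, and while it is built on the same central object as the paper's proof --- the pairs covered by a unique edge with a given apex $v$ (your $B(v)$, the paper's $x_v$) and the double count $\sum_v B(v)\le\binom n2$ --- you close the argument differently. The paper averages first: it picks a vertex $v$ with $x_v\le\frac{n-1}{2}$, uses that smallness to show $\diam(H\setminus v)=2$ (any non-adjacent pair in $H\setminus v$ loses at most $\frac{n-3}{2}$ potential common neighbours, so at least one survives for $n\ge5$), and concludes $W(H\setminus v)=\binom{n-1}{2}+x_v<\binom n2$, a contradiction at that single vertex. You instead prove the pointwise lower bound $B(v)\ge n-2$ for \emph{every} $v$, by showing that a broken pair at distance $\ge3$ forces $N_{P(v)}(a)\cup N_{P(v)}(b)=V\setminus v$ and hence many broken pairs, and only then invoke the double count to get $n(n-2)\le\binom n2$, i.e.\ $n\le3$. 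Your route costs an extra lemma (the distance-$\ge3$ neighbourhood-covering step, which the paper sidesteps by choosing a good vertex up front) but buys a clean quantitative statement about every vertex and a self-contained finish that does not need to separately justify $\diam(H\setminus v)=2$; the paper's route is shorter but leans on first selecting the vertex minimizing $x_v$. Both arguments are sound; your boundary cases ($n\le3$, and connectivity of $H\setminus v$, which is part of the definition of a \v Solt\'es' hypergraph since $W(H\setminus v)$ must be finite) are indeed immediate.
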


\begin{proof}
    Let $H$ be a $3$-uniform \v Solt\'es' hypergraph with diameter $1.$
    We associate with every pair of vertices $(v_i,v_j)$ in $\binom{V}{2},$ all vertices $v' \in V\setminus\{v_i,v_j\}$ for which $\{v_i,v_j,v'\} \in E.$
    For every vertex $v'$, count the number $x_{v'}$ of pairs associated with $v'$ and no other vertex.
    Since there are $\binom n2$ pairs in total, and $n$ vertices, there is a vertex $v$ for which this number is bounded by $\frac{n-1}{2}.$
    Then it is easy to see that $H \setminus v$ has diameter $2$ as follows. For every $2$ vertices $u,w$ in $V \setminus v,$ which are not adjacent in $H \setminus v$ there are at most $\frac{n-3}{2}$ vertices which are not a neighbour of both $u,w$.
    So they have at least $\frac{n-3}{2} \ge 1$ (we only need to consider $n \ge 5$ by~\cref{thr:smallestSH}) neighbours in common.
    Since $\diam(H \setminus v)=2,$ we conclude that 
    $W(H \setminus v)=\binom{n-1}{2}+x_v<\binom{n}{2}$, contradicting the choice of $H$.
\end{proof}

\begin{remark}\label{rem:4unif_diam1&nonVT}
Among the $18$ examples for $ S(2, 4, 25)$, as mentioned in~\cite{RR10}, there are also non-vertex-transitive examples.

With the same idea as for~\cref{prop:no3unifdiam1SH}, one can prove that all $4$-uniform \v Solt\'es' hypergraphs for which $\diam (H)=1$ and $\diam(H \backslash v)=2\ \forall v \in v(H)$ were determined in~\cref{prop:Steiner24nSH}. 
\end{remark}

A first example of a \v Solt\'es' hypergraph with diameter larger than $1$ is obtained by duplicating the dual of $K_4$ (equivalently the Fano plane with one vertex removed), with the corresponding hyperedges put together, and where the vertex set of every copy is a hyperedge as well.
This one is vertex-transitive, but not (hyper)edge-transitive (there is exactly one pair of complementary hyperedges). It is presented in~\cref{fig:diam2_F^*} (left), with $4$ hyperedges drawn by dashed lines, and also the vertices with the same colour belonging to a hyperedge.

\begin{remark}
 The $6$-uniform hypergraph on $\{0,1,2,\ldots,11\}$ given by its hyperedges
 $$((0, 1, 2, 3, 4, 5), (0, 1, 6, 7, 8, 9), (2, 3, 6, 7, 10, 11), (4, 5, 8, 9, 10, 11), (0, 2, 4, 6, 8, 10), (1, 3, 5, 7, 9, 11))$$ is a (uniform) \v Solt\'es' hypergraph with diameter $2.$
 %This implies that there are uniform hypergraphs of order $n$, for every $10 \le n \le 13.$
\end{remark}

\begin{figure}[h]
 \centering
 \begin{tikzpicture}[scale=0.5]
 \foreach \x in {30,150,270}{
 \draw[fill] (\x+5:2.5) circle (0.15);
  \draw[fill=white] (\x-5:2.5) circle (0.15);
}
 \foreach \x in {90,210,330}{
 \draw[fill] (\x+5:5) circle (0.15);
  \draw[fill=white] (\x-5:5) circle (0.15);
}
 \draw[dashed] (0,0) circle (3);

\draw[dashed,rotate=30] (2.5,0) ellipse (1.6cm and 5.25cm);
\draw[dashed,rotate=150] (2.5,0) ellipse (1.6cm and 5.25cm);
\draw[dashed,rotate=270] (2.5,0) ellipse (1.6cm and 5.25cm);
 \end{tikzpicture} \quad
 \begin{tikzpicture}[scale=1.15]
 \foreach \x in {0,72,144,216,288}{
 \draw[fill] (\x:2.5) circle (0.1);
  \draw[fill=white] (\x:1.3) circle (0.1);
  \foreach \y in {-24,24}{
    \draw[fill] (\x+\y:2.08) circle (0.1);
  }
}
 \foreach \x in {0,72,144,216,288}{
\draw[dashed,rotate=\x+36] (0:1.05) ellipse (0.34cm and 2.2cm);
\draw[dotted,rotate=\x] (-1.85,-1.75)--(-2.175,-1.75)--(-2.175,1.75)--(-1.85,1.75)--cycle;
}

 \end{tikzpicture} 
 
 \caption{Uniform \v Solt\'es' hypergraphs $H$ with diameter $2$}
 \label{fig:diam2_F^*}
\end{figure}
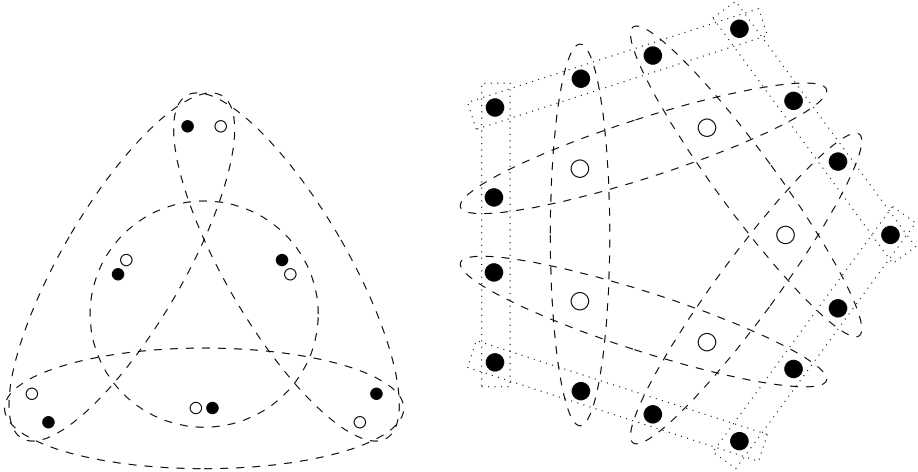

All the constructions we have given so far had diameter $1$ or $2$. The simple reason for this is that $W(H)-W(H \setminus v)$ being zero is a rare event and thinking about hypergraphs with larger diameter is hard.
Using the census of \url{https://staff.matapp.unimib.it/~spiga/census.html}~\cite{POSV13, PSV15} and a computer search, we managed to find $4$ additional \v Solt\'es' hypergraphs, having diameter $2,6,7$ and $8$.
They are $2$-regular (as is the case for $C_{11}$) and $4$-uniform, being the dual hypergraphs of $4$-regular $2$-uniform hypergraphs (graphs $45705, 51044, 51045, 51046$ from \url{https://houseofgraphs.org/graphs/}), whose graph6-strings are also listed in~\cref{sec:appb}. The smallest one (the dual of the blow-up of a $C_5$) being depicted in~\cref{fig:diam2_F^*} (right).
Note that since the initial graphs are arc-transitive, the dual hypergraph is vertex-transitive and so it is sufficient to consider $W(H)-W(H \setminus v)$ for just one vertex $v$. The full verification can be found at \cite[doc. \text{graphs\_dual\_SH}]{C24}.

Subjectively, they may be considered as surprising as the possible existence of \v Solt\'es' graphs different from $C_{11}$ (since there are more uniformities to consider, there were more chances for the existence of course) and evidence that excluding the existence of \v Solt\'es' graphs different from $C_{11}$ is likely very hard if not impossible.

The examples so far are all somewhat large (order at least $11$). Next, we present a uniform \v Solt\'es' hypergraph, which has order $10.$ This may well be the minimum possible order.

\begin{exam}\label{exam:10_5}
Consider the Petersen graph $P_{5,2}$, as e.g. depicted in~\cref{fig:Petersen} (left) and choose a $(6,2)$-cycle-cover, that is a collection of $6$ smallest cycles in $P_{5,2}$ that cover every edge twice (this is easily seen to be unique up to isomorphism).
Equivalently, choose the $6$ faces of the hemi-dodecahedron as presented on the right in~\cref{fig:Petersen} ($a$ and $a'$ are the same vertex).
\begin{figure}[h]
\centering
\begin{tikzpicture}
\graph[math nodes, clockwise]
    { subgraph I_n [V={0,1,2,3,4}] --
      subgraph C_n [V={5,6,7,8,9},radius=1.25cm];
      {[cycle] 0,2,4,1,3} };

\end{tikzpicture}\quad
\begin{tikzpicture}
\graph[math nodes, clockwise]
    { subgraph C_n [V={0,2,4,1,3},radius=1.25cm];
      subgraph C_n [V={5,6,7,8,9,5',6',7',8',9'},radius=2.25cm];
        {[cycle] 0,5'}; {[cycle] 2,7'};
      {[cycle] 4, 9'};  {[cycle] 1,6};  {[cycle] 8,3};
 };
\end{tikzpicture}
    \caption{The Petersen graph $P_{5,2}$ and the hemi-dodecahedron}
    \label{fig:Petersen}
\end{figure}
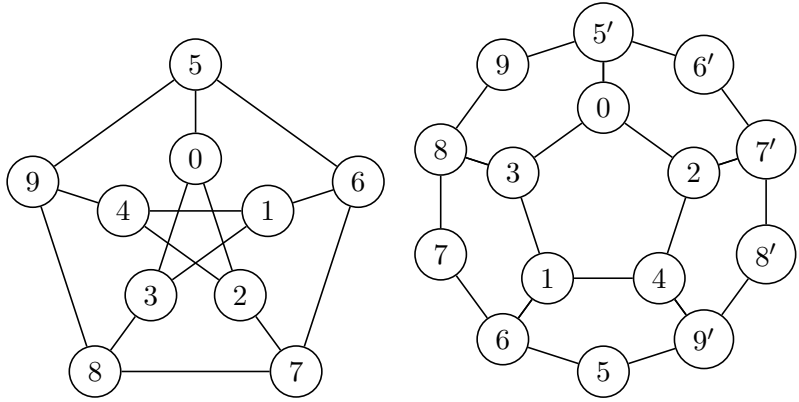

The $(6,2)$-cycle-cover forms a (vertex-transitive) $5$-uniform \v Solt\'es' hypergraph of order $10$ and size $6$, with $V(H)=\{0,1,2,\ldots,9\}$ and $$E(H)=\{(0, 1, 2, 3, 4), (0, 2, 5, 6, 7), (1,3,6,7,8), (2, 4, 7, 8,9), ( 0,3,5,8,9), (1, 4, 5,6, 9)\}.$$
Indeed, it is easy to verify that $W(H)=45$ and also $W(H \setminus v)=45$ for every $v \in [10].$ The latter since the hypergraph is vertex-transitive and taking e.g. $v=5$, we note that $\diam(H \setminus v)=2$ and the pairs of vertices which are at distance $2$ from each other contain at least one neighbour of $v.$
This results in $W(H \setminus v)=\binom{9}{2}+3\cdot 4 -\binom{3}{2}$, since each neighbour of $v$ belongs to one $C_5$ and thus has $4$ neighbours and $4$ non-neighbours within $H \setminus v.$
\end{exam}

Finally, we determine the \v Solt\'es' hypergraph with minimum size. It is isomorphic to a $(10,6,3)$-design (BIBD). A structure with $10$ elements, $5$ edges containing $6$ elements each, such that every element belongs to $3$ edges and every $2$ elements share precisely $3$ edges.
See~\cref{fig:BIBD}. Here the elements/ vertices are the rows, and an edge is presented with a colour containing a cell of the $6$ corresponding rows.

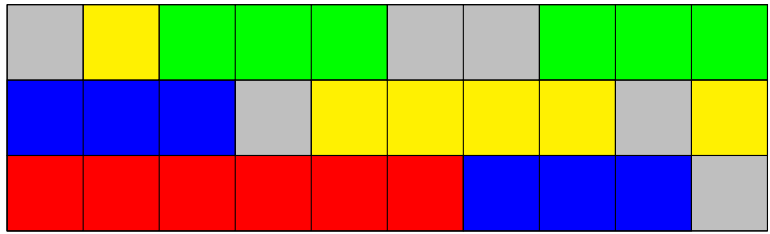
\begin{figure}[h]
    \centering
    \begin{tikzpicture}
        \filldraw [fill=lightgray] (0,0) rectangle (10,3);
      \filldraw [fill=red] (0,0) rectangle (6,1);
      \filldraw [fill=blue] (0,1) rectangle (3,2);
      \filldraw [fill=blue] (6,0) rectangle (9,1);
      \filldraw [fill=green] (2,2) rectangle (5,3);
      \filldraw [fill=green] (7,2) rectangle (10,3);
      \filldraw [fill=yellow] (2,2) rectangle (1,3);
      \filldraw [fill=yellow] (8,2) rectangle (4,1);
      \filldraw [fill=yellow] (9,2) rectangle (10,1);
  \draw[] (0,0) grid (10,3);

\end{tikzpicture}
    \caption{A $(10,6,3)$-BIBD; a $6$-uniform \v Solt\'es' hypergraph with order $10$ and size $5$ }
    \label{fig:BIBD}
\end{figure}

\begin{thr}
    The dual of $K_5^{(3)}$ is the unique uniform \v Solt\'es' hypergraph of minimum size.
\end{thr}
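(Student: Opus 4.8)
The plan is to prove both halves of the statement: that the dual of $K_5^{(3)}$ is a uniform \v Solt\'es' hypergraph of size $5$, and that it is the unique uniform \v Solt\'es' hypergraph with size at most~$5$. The first half is a direct check. The dual $H$ of $K_5^{(3)}$ is $6$-uniform on the $n=10$ vertices given by the $3$-subsets of $[5]$, with $5$ hyperedges, one per point of $[5]$ (the pencil of $3$-subsets through that point); each vertex has degree~$3$, so it is $3$-regular and $mk=5\cdot 6=30=10\cdot 3$. Since any two distinct $3$-subsets of $[5]$ intersect, $\diam(H)=1$ and $W(H)=\binom{10}{2}=45$. As $S_5$ acts transitively on the $3$-subsets and preserves the pencil structure, $H$ is vertex-transitive, so it suffices to compute $W(H\setminus v)$ for one $v$; the same diameter-$2$ bookkeeping as in \cref{exam:10_5} gives $W(H\setminus v)=\binom{9}{2}+9=45$, where the $9=n-1$ distance-$2$ pairs are exactly the pairs of surviving vertices avoiding both surviving hyperedges.

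For the general analysis I would set up two tools. The \emph{connectivity constraint}: since $H$ and every $H\setminus v$ are connected, the edges avoiding $v$ must cover $V\setminus v$ and span a connected subhypergraph, so $m-\deg(v)\ge \ceilfrac{n-2}{k-1}\ge 1$; in particular $\deg(v)\le m-1$, which bounds $\Delta$. The \emph{transmission identity}: $W(H)-W(H\setminus v)=D(v)-\Delta(v)$, where $D(v)=\sum_{u}d(u,v)$ and $\Delta(v)\ge 0$ is the total increase of pairwise distances caused by deleting $v$, so the \v Solt\'es' condition is precisely $D(v)=\Delta(v)$ for all $v$. The key simplification is that, for fixed small $m$, a vertex is determined up to distance-equivalence by the set $S\subseteq[m]$ of edges containing it (equal $S$ means twins), so $W(H)$ and all $W(H\setminus v)$ depend only on the multiplicities $x_S=\#\{v:\ \mathrm{pattern}(v)=S\}$. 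Uniformity reads $\sum_{S\ni i}x_S=k$ for each edge $i$, and together with connectivity and the \v Solt\'es' identity this is a finite integer-feasibility problem once $k$ is bounded.

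For the lower bound I would run through $m\le 4$. The case $m=1$ disconnects on any deletion. For $m=2$ and $m=3$, deleting a maximum-degree vertex leaves $\le 1$ edges, forcing either a single surviving edge that cannot cover the $\ge 4$ remaining vertices, or a collapse to $n\le 4$, contradicting $n\ge 5$ by~\cref{thr:smallestSH}. For $m=4$ a degree-$3$ vertex again forces $n=4$, while the $\Delta=2$ configurations reduce to $2$-regular structures; the extremal one is the dual of $K_4$ on $6$ vertices (the diameter-$2$ hypergraph of \cref{fig:diam2_F^*}-type with four triples pairwise meeting in one point), for which a direct count gives $W(H)=18\ne 14=W(H\setminus v)$, so it is not \v Solt\'es'. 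This yields $m\ge 5$.

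Finally, for $m=5$ I would show the type system has a unique solution. The connectivity bound caps the degrees, and the identity $W(H\setminus v)=\binom{n-1}{2}+(n-1)$ (when $\diam(H\setminus v)=2$) forces each deletion to create exactly $n-1$ units of extra distance; solving the resulting system pins the configuration to a $3$-regular $6$-uniform structure on $10$ vertices in which every two hyperedges meet in $3$ vertices, and such an incidence structure is exactly the $(10,6,3)$-design, i.e.\ the dual of $K_5^{(3)}$. I expect the main obstacle to be twofold. First, bounding the uniformity $k$ a priori for $m=5$ so that the type system is genuinely finite; I would attack this through the covering inequality together with the transmission identity, which should force the overlaps between the five edges to be large and hence rule out large $k$. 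Second, and more delicate, proving the surviving parameter set admits a \emph{unique} hypergraph up to isomorphism, which requires excluding spurious non-regular or positive-diameter solutions of size $5$ before identifying the remaining one with the unique $(10,6,3)$-design; this combinatorial uniqueness is the technical heart of the argument.
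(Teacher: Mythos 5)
Your verification that $\left(K_5^{(3)}\right)^\star$ is a \v Solt\'es' hypergraph is correct, and your elimination of $m\le 4$ is workable in outline, though some of the justifications are off: a single surviving hyperedge \emph{can} cover all remaining vertices, so the contradiction there is not a failure of coverage but that a one-edge hypergraph is a clique, whence $W(H\setminus v)=\binom{n-1}{2}<\binom{n}{2}\le W(H)$; likewise a degree-$3$ vertex with $m=4$ does not "force $n=4$", it forces the same clique contradiction. The genuine gap is the case $m=5$, which you explicitly leave open: you defer both the a priori bound on the uniformity $k$ and the uniqueness of the incidence structure, calling the latter "the technical heart of the argument". That heart is exactly what the proof must supply, and your proposed route (an integer-feasibility "type system" together with the unestablished assumption that $\diam(H)=1$ and $\diam(H\setminus v)=2$) does not yet supply it.

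The paper closes this in a few lines with tools you almost have but do not assemble. First, $\Delta\le m-2$ rather than your $\Delta\le m-1$: if only one edge survives the deletion of a maximum-degree vertex, the remainder is a clique and $W$ drops. Second, for $m=5$ and $2\le\delta\le\Delta\le 3$, a degree-$2$ vertex $u$ and a degree-$3$ vertex $v$ cannot coexist, since $W(H\setminus u)<W(H\setminus v)$ when both are finite; hence $H$ is $\Delta$-regular and the handshake identity $n\Delta=5k$ gives $5\mid n$. Third --- the step you are missing --- the no-twins observation: two vertices lying in the same set of hyperedges would disconnect one another upon deletion, so the map sending a vertex to its set of incident edges is injective and $n\le\binom{5}{\Delta}=10$. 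Combined with $n\ge 6$ and $5\mid n$ this forces $n=10$, and equality means \emph{every} $\Delta$-subset of the five edges occurs as the incidence pattern of exactly one vertex, which pins the hypergraph down to $K_5^\star$ (for $\Delta=2$) or $\left(K_5^{(3)}\right)^\star$ (for $\Delta=3$); a direct computation rules out the former. No separate bound on $k$ and no further uniqueness argument are needed once injectivity of the incidence pattern is in hand; I recommend you replace your type-system plan for $m=5$ with this argument.
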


\begin{proof}
    Note that a \v Solt\'es' hypergraph has minimum degree at least $2$, since if not, a vertex $v$ of degree $1$ has a neighbour whose removal disconnects $v$ from the remaining vertices.
    Also the size $m$ and maximum degree satisfy $m \ge \Delta+2,$ as otherwise removing a vertex of maximum degree leaves either a disconnected hypergraph, or a clique (with one hyperedge).
    
    Now $2 \le \delta \le \Delta \le m-2$ implies that $m \ge 4.$
    We first exclude $m=4.$
\begin{claim}
    A \v Solt\'es' hypergraph cannot have size $4$ or less.
\end{claim}

\begin{claimproof}
    In this case, $\Delta=\delta=2$ and by~\cref{thr:smallestSH}, $n\ge 6.$
    No two vertices can belong to the same set of hyperedges (otherwise removing one of them disconnects the other), so $n \le \binom{m}{2}.$ The only case where equality holds, is the $3$-uniform hypergraph on $6$ vertices $K_4^\star$, which is not a \v Solt\'es' hypergraph.
\end{claimproof}

    For a $k$-uniform hypergraph of size $5$ satisfying $\delta=2$ and $\Delta=3$, with vertices $u$ and $v$ having degree $2$ and $3$ respectively,
    we have $W(H \setminus u)< W(H \setminus v)$ if both are finite.
    So we conclude that a $k$-uniform \v Solt\'es' hypergraph of size $5$ is $2$- or $3$-regular.
    Then the order $n$ satisfies $n \Delta = 5k $ by the analogue of the handshaking lemma and thus $5 \mid n.$
    Together with $n\ge 6$ and $n \le \binom{m}{2}=10$, this implies that $n=10.$
    The equality $n= \binom{m}{2}$ implies that we know the hypergraph exactly as a function of $\Delta \in \{2,3\}.$
    We have $5$ edges, and every pair resp. triple of edges intersects in exactly $1$ vertex.
    The $4$-uniform hypergraph ($K_5^\star$) is not a \v Solt\'es' hypergraph, but the $6$-uniform hypergraph ($\left(K_{5}^{(3)}\right)^\star$) is; $W(H)=\binom{10}{2}=\binom{9}{2}+3^2=W(H\setminus v)$.
    %This is exactly the dual of the $3$-uniform clique $K_{5}^{(3)}$.
    As we have ruled out the other possibilities, we conclude that $\left(K_{5}^{(3)}\right)^\star$ is the unique uniform \v Solt\'es' hypergraph of size $5.$
\end{proof}

With a little bit more work, the above theorem can be generalized to arbitrary hypergraphs.

\begin{remark}
        Noting that a \v Solt\'es' hypergraph of size $5$ satisfies $\delta \ge 2, \Delta \le 3$ and the incident edges of a vertex cannot be a subset of the incident edges of another vertex, we know that the candidates correspond with the dual of an antichain in $\binom{[5]}{2,3}$. For $n \ge 9,$ the antichain is uniform and hence there are only $4$ candidates.
        A brute force check (\cite[doc. \text{NonUniformSize5}]{C24})
        confirms that $\left(K_{5}^{(3)}\right)^\star$ is also the unique (not necessarily uniform) \v Solt\'es' hypergraph of size $5.$
\end{remark}

We conclude that a \v Solt\'es' hypergraph has order at least $5$ and size at least $5$, and for both quantities there is a unique \v Solt\'es' hypergraph attaining equality.

Analogous to the question by \v Solt\'es'~\cite{Soltes91}, one can ask to characterize all the uniform \v Solt\'es' hypergraphs.
Given the two infinite families and additional examples, it may be hard to characterize all them. 
This intuition may be even clearer in the case of non-uniform \v Solt\'es' hypergraphs, where we also gave various non-regular examples.
A few potential subquestions one may pose to have an idea on the richness of uniform \v Solt\'es' hypergraphs, are the following ones (the third subquestion might be as hard as finding a \v Solt\'es' graph different from $C_{11})$. 

\begin{q}
\begin{itemize}
    \item There is no uniform \v Solt\'es' hypergraph with order bounded by $9$?
 \item For every $n\ge 10$, there exists an uniform \v Solt\'es' hypergraph with order $n$?\\
 (Note that we have e.g. found examples for every $10 \le n \le 13$)
 \item There exist $3$-uniform \v Solt\'es' hypergraphs?  
 \item There exist $k$-uniform \v Solt\'es' hypergraphs for every $k \ge 4$? \\(Note that we have e.g. found examples for every $4 \le k \le 6$)
 \item There are uniform \v Solt\'es' hypergraphs which are not regular? \\(in \cref{rem:4unif_diam1&nonVT} we mentioned there are non-vertex-transitive examples)
 \end{itemize}
\end{q}

We expect that all of these subquestions may well be answered with yes.
The latter would indicate that the existence of a \v Solt\'es' graph different from $C_{11}$ cannot easily be excluded, given the richness of the class of uniform \v Solt\'es' hypergraphs. For that reason, any negative answer on one of the last three questions would be even more surprising or interesting.

\section*{Acknowledgments}
The author thanks Dragan Stevanovi\'c, Pablo Spiga, Primo{\v{z}} Poto{\v{c}}nik and Marston Conder for referrals, updating on the current censuses, and generating and uploading the full census (of arc-transitive cubic graphs with order bounded by $10^4$).
%https://users.fmf.uni-lj.si/potocnik/projects.html
%https://jan.ucc.nau.edu/~swilson/C4FullSite/index.html
%https://staff.matapp.unimib.it/~spiga/census.html

\bibliographystyle{abbrv}
\bibliography{ref}

\appendix

 \section{Large diameter uniform \v Solt\'es' hypergraphs }\label{sec:appb}

Here we give the graph6-strings of four $4$-regular graphs for which its dual is a \v Solt\'es' hypergraph.

\begin{verbatim}
IYIYMOre_

~??~s`IB?oG?_B?@_A??_GC??o??o?@O?A_??O??A???W???S???c??
@O???G???@?@??C????W????K????H????AOG??AO????B?????CA????
Ca????E??????o?????G_????AO?????@_?????@AG?????W??????
B???????g?O????G_??????B???????BC???????w???????E???????
AO???????B????????E_???????@_???????AOC???????K????????
@g????????B_????????B?????????B_?????????X_????????F_


~?@Ss`AA?_C@?A?G?C?@??H??a?G??G_?C??G??@???C?G?G???CG??@@??GG??
@@???A????C???A?@??@?????O????B?????C?????G?????A@?????OG???OC?????
_G????@??????A?C????A?W?O?E???????_??????C??A????O??O????W???????
G???????C?W?G???@???C????A?G??????A?G??????c???????_C???????C?
G???????O?G?????E?????????G?W?C?????O???O?????O???_?????E??????????
O?C???????@_?????????S??????????@??`???????@O??????????G?CC?????@?
@?????????G?O?????????C?@_?O??????A??_@???????_???W???????O?A?
GC??????A_??g?????????W???_???????D?W???????????GCO??????????K?
D??????????`?@?????????O?OA?????????A?Cc?????????????ACa???????????__c
\end{verbatim}
\begin{verbatim}
~?@os`AQ__C@_D?A?@??OOK?@O?@O?@???_??W??I??@G???o???_???O???C?O?
@_???S???@G????W????I????A?????O????B?????Q?????K?????a?????@_?????
K??????_?????@??????@??O???@_?????@G??????E??????AG???????o???????
o???????S???????C????????_???????E????????c????????S????????
K????????`?????????W????????@_????????B?????????A?????????
@??????????O??O??????E?????????@G?????????@O?????????@_?????????
GO??????????K??????????@_??????????E???????????I???????????
G???????????C???????????B???????????@G???????????A_???????????
D????????????E???????????@@?????????????o????????????E?????????????
W?????????????o?????????????_?????????????O?O???????????C?Q?
O?????????@_?g???????????Q?W????????????S?E????????????
S@O????????????K?S???????????@@???????????????W??????????????
@_??????????????B???????????????B???????????????@O???????????????O?
O?????????????CA????????????????W????????????????o????????????????
o????????????????W????????????????E?????????????????O?
[??????????????A@@@???????????????E?K???????????????E?
o???????????????B@_????????????????p_????????????????Eo?
\end{verbatim}

\section{More examples of non-uniform \v Solt\'es' hypergraphs}\label{sec:app}

\begin{prop}\label{prop:nonunifSH_diam2}
 There exist infinitely many non-uniform \v Solt\'es' hypergraphs with diameter $2$.
\end{prop}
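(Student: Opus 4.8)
The plan is to exhibit an explicit family $\{H_m\}_{m \ge m_0}$ of vertex-transitive hypergraphs, so that the \v Solt\'es' identity $W(H_m) = W(H_m \setminus v)$ need only be checked for a single vertex $v$, and to verify it by a direct count of pairs at distance $2$, exactly as was done for the diameter-$1$ families in \cref{prop:cyclelikeSH,prop:Steiner24nSH}. First I would set up the bookkeeping for diameter $2$. If $\diam(H) = 2$, then every distance is $1$ or $2$, so $W(H) = \binom{n}{2} + p_2(H)$, where $p_2(H)$ denotes the number of pairs at distance exactly $2$. Provided $H\setminus v$ again has diameter $2$, the same formula gives $W(H\setminus v) = \binom{n-1}{2} + p_2(H\setminus v)$. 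Since deleting $v$ and its incident edges can only increase distances, writing $A_v$ for the number of vertices at distance $2$ from $v$, and $C_v$ for the number of pairs $\{a,b\}\subseteq V\setminus v$ with $d_H(a,b)=1 < d_{H\setminus v}(a,b)$, one gets $p_2(H\setminus v) = p_2(H) - A_v + C_v$. Hence the \v Solt\'es' condition collapses to the single balance equation $C_v - A_v = n-1$ for every $v$.

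The construction itself should be engineered to satisfy this balance. I would take a Cayley-type hypergraph on $\mathbb{Z}_n$ (or on a suitable product group, to let the order range over an infinite set) built from two orbits of hyperedges of different cardinalities, which forces non-uniformity: a \emph{skeleton} of a few large hyperedges whose overlaps guarantee that the hypergraph, and each of its one-vertex deletions, stays connected with diameter $2$; and a \emph{sparse layer} of small hyperedges (say of size $2$ or $3$) that both creates the distance-$2$ pairs contributing to $A_v$ and produces pairs whose \emph{only} common hyperedges pass through one fixed extra vertex, so that deleting that vertex collapses them and feeds $C_v$. The sizes of the two orbits would be tuned so that, after computing $A_v$ and $C_v$ by an orbit count, the equation $C_v - A_v = n-1$ reduces to a single polynomial identity in the parameter $m$; selecting the residue and divisibility conditions on $m$ that make this identity hold (in the spirit of the $n\pmod{12}$ casework of \cref{thr:smallestSH}) then yields infinitely many orders.

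The hard part is the tension built into the balance equation $C_v - A_v = n-1$. Making $C_v$ large requires that many distance-$1$ pairs be \emph{fragile}, i.e.\ held together only through a single common vertex, whereas keeping $\diam(H\setminus v) = 2$ requires the hypergraph to be \emph{robust}, so that no collapsed pair is ever pushed past distance $2$. A naive two-copies-plus-matching attempt illustrates the trap: joining two diameter-$1$ \v Solt\'es' hypergraphs on $n_0$ vertices by a perfect matching keeps $\diam(H)=\diam(H\setminus v)=2$, but there $C_v = A_v = n_0-1$ exactly, so $C_v - A_v = 0 \neq n-1$ and the identity fails. Reconciling the two demands — enough fragile co-memberships to supply $C_v = A_v + n - 1$, yet a dense enough skeleton that every collapsed pair retains a length-$2$ detour avoiding $v$ — is the crux, and it is precisely this that dictates the quantitative choice of the two edge sizes. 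Once a family meeting both demands is fixed, the remaining verification (vertex-transitivity, $\diam(H_m)=2$, $\diam(H_m\setminus v)=2$, and the resulting polynomial identity) is routine orbit-counting and can be carried out uniformly in $m$.
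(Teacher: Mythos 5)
There is a genuine gap: the proposition is an existence statement, and your proposal never actually exhibits the family. Your bookkeeping is correct and matches the paper's implicit accounting --- writing $A_v$ for the number of vertices at distance $2$ from $v$ and $C_v$ for the number of pairs of $V\setminus v$ whose distance rises from $1$ to $2$, one has $W(H)-W(H\setminus v)=(n-1)+A_v-C_v$, so the condition is $C_v=A_v+(n-1)$ --- and you correctly identify the crux as reconciling ``many fragile pairs'' with ``robust diameter $2$ after deletion.'' But you then stop at a description of what a construction \emph{should} look like (``a Cayley-type hypergraph \ldots tuned so that \ldots the equation reduces to a polynomial identity''), without producing one or verifying that any choice of parameters works. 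Since the entire content of the proposition is that such a family exists, this is not a proof but a reduction of the problem to itself.

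For comparison, the paper resolves the tension you describe with a very short explicit construction: for each $k\ge 2$, take the complete $k$-partite graph with classes of size $2k+3$, delete a $2$-factor, and add each partition class as a hyperedge (non-uniformity comes for free from mixing $2$-edges with $(2k+3)$-edges). Here $n=k(2k+3)$, each vertex has exactly $A_v=2$ non-neighbours (its two mates in the deleted $2$-factor, which lie in other classes), and deleting $v$ kills only the hyperedge on $v$'s class, so exactly the $C_v=\binom{2k+2}{2}$ pairs inside that class move from distance $1$ to distance $2$ (they keep plenty of common neighbours in other classes, so the diameter stays $2$). The identity $\binom{2k+2}{2}=k(2k+3)+1=(n-1)+2$ is precisely your balance equation. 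The lesson is that the ``fragile'' pairs need not be spread across a sparse layer of small edges at all: concentrating them inside a single large hyperedge per vertex, sitting on top of a dense graph skeleton, satisfies both demands simultaneously and makes the count a one-line binomial identity rather than a residue-class case analysis.
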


\begin{proof}
 For every $k \ge 2$, we construct a hypergraph $H_k$, obtained by taking a complete $k$-partite graph whose partition classes have size $2k+3$, minus a $2$-factor and adding a hyperedge for every partition class. 
% For $k>1$, one can choose this in such a way that there are twin-vertices.
 Examples for $H_2$ and $H_3$ (there are multiple choices for the removed $2$-factor) are depicted in~\cref{fig:H_2&H_3}, with the hyperedges for a partition class drawn with dashed lines, and the removed edges in red. 
% Here $H_k$ satisfies $\diam(H_k)=2$ when $k \ge 2$ and $\delta(H_k)>\frac{n}{2}$ when $k\ge3.$
 The important observation is that both $H$ and 
 For every $2$ vertices in $H \setminus v,$ which do not both belong to the same hyperedge of size $2k+3$ as $v$ in $H$, the distance within $H \setminus v$ is unchanged. The distance between vertices of the same partition class, changes from $1$ to $2.$
 Now, we observe that $\sum_{u \in V \setminus v} d_{H_k}(u,v)=(n-1)+2=k(2k+3)+1=\binom{2k+2}{2},$ i.e., the sum of distances using $v$ initially equals the number of distances (between the vertices sharing the same hyperedge of size $2k+3$ with $v$ in $H_k$) that increase (by one) in $H_k \setminus v$. 
\end{proof}
%diam(H)=\diam(H \setminus v)=2$ is the key

\begin{figure}[h]
 \centering
 \begin{tikzpicture}[rotate=90]
 \foreach \x in {0,1,...,6}{
 \foreach \y in {0,1,...,6}{
\draw[black!40!white](\x,0) -- (\y,2.4);
}
}
\foreach \x in {0,1,...,5}{
\draw[red,thick](\x,0) -- (\x+1,2.4);
\draw[red,thick](\x,0) -- (\x,2.4);
}
\draw[red, thick](6,2.4)--(6,0) -- (0,2.4);

\foreach \x in {0,1,...,6}{
\draw[fill] (\x,0) circle (0.15);
}
\foreach \y in {0,1,...,6}{
\draw[fill] (\y,2.4) circle (0.15);
}

\draw[dashed] (3,0) ellipse (3.5cm and 0.75cm);
\draw[dashed] (3,2.4) ellipse (3.5cm and 0.75cm);
 \end{tikzpicture}\quad
 \begin{tikzpicture}[scale=0.65]
 \foreach \x in {0,1,...,8}{
 \foreach \y in {0,1,...,8}{
 \draw[black!20!white](-2+0.5*\x,0.866*4-0.866*\x) -- (6+0.5*\y,-4*0.866+0.866*\y);
 };
 }

 \foreach \z in {0,1,...,8}{
 \foreach \y in {0,1,...,8}{
 \draw[black!20!white](\z,8*0.866) -- (6+0.5*\y,-4*0.866+0.866*\y);
 };
 }

 \foreach \z in {0,1,...,8}{
 \foreach \x in {0,1,...,8}{
 \draw[black!20!white](\z,8*0.866) -- (-2+0.5*\x,0.866*4-0.866*\x);
 };
 }

 \foreach \z in {7,8}{
 \foreach \y in {7,8}{
 \draw[red, thick](\z,8*0.866) -- (6+0.5*\y,-4*0.866+0.866*\y);
 };
 }
 
 \foreach \z in {5,6}{
 \foreach \y in {5,6}{
 \draw[red, thick](\z,8*0.866) -- (6+0.5*\y,-4*0.866+0.866*\y);
 };
 }

 \foreach \z in {2,3}{
 \foreach \x in {2,3}{
 \draw[red, thick](\z,8*0.866) --(-2+0.5*\x,0.866*4-0.866*\x);
 };
 }
 
 \foreach \z in {0,1}{
 \foreach \x in {0,1}{
 \draw[red, thick](\z,8*0.866) -- (-2+0.5*\x,0.866*4-0.866*\x);
 };
 }

\foreach \x in {7,8}{
 \foreach \y in {0,1}{
 \draw[red, thick](-2+0.5*\x,0.866*4-0.866*\x) -- (6+0.5*\y,-4*0.866+0.866*\y);
 };
 }

 \draw[red, thick](0,0) -- (8,0)--(4,8*0.866)--(0,0);
 
 \foreach \x in {5,6}{
 \foreach \y in {2,3}{
 \draw[red, thick](-2+0.5*\x,0.866*4-0.866*\x) -- (6+0.5*\y,-4*0.866+0.866*\y);
 };
 }

 \draw[rotate=120,dashed] (0,0) ellipse (4.5cm and 0.75cm);
 \draw[rotate=0,dashed] (4,8*0.866) ellipse (4.5cm and 0.75cm);
 \draw[rotate=-120,dashed] (-4,8*0.866) ellipse (4.5cm and 0.75cm);

 \foreach \x in {0,1,...,8}{
 \draw[fill] (-2+0.5*\x,0.866*4-0.866*\x) circle (0.15);
 }
 \foreach \y in {0,1,...,8}{
 \draw[fill] (6+0.5*\y,-4*0.866+0.866*\y) circle (0.15);
 }
 \foreach \z in {0,1,...,8}{
 \draw[fill] (\z,8*0.866) circle (0.15);
 }

 \node at (2,7.26) {\large $u_3$};
 \node at (3,7.26) {\large $u_4$};

 \end{tikzpicture}
 
 \caption{Two \v Solt\'es' hypergraphs with diameter $2$.}
 \label{fig:H_2&H_3}
\end{figure}
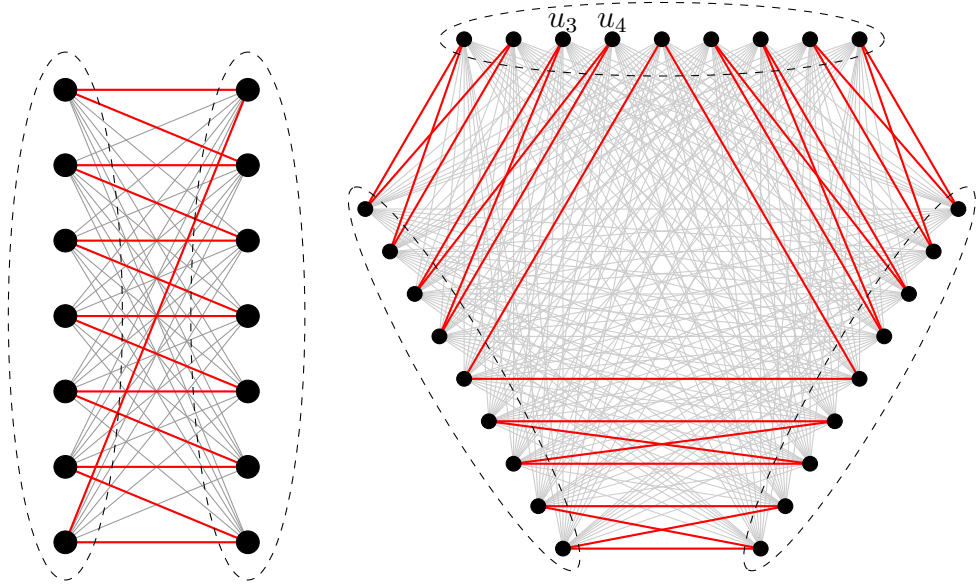

\begin{exam}
 Let $k$ be even and sufficiently large.
 First we take a graph $G$ with vertex set $A \cup B$.
 Here $B=B_1 \cup B_2$ is a set with $k+2$ vertices, where the $k-8$ vertices from $B_1$ have $k-4$ neighbours in $B$ and the $10$ vertices in $B_2$ have $k-3$ neighbours in $B$,
 i.e., $\deg_{G[B]}(v)=k-4$ if $v \in B_1$ and $\deg_{G[B]}(v)=k-3$ if $v \in B_2$.
 Let $A=A_1 \cup A_2$ with $\abs{A_1}=k-2$ and $\abs{A_2}=2,$ such that $\deg_{G[A]}(v)=k-6$ if $v \in A_1$ and $\deg_{G[A]}(v)=k-7$ if $v \in A_2$.
\end{exam}

 In the bipartite graph $G[A,B]$, the degree of vertices of $A_1$ have degree $\frac k2+7,$ the ones of $A_2$ $\frac k2+8$, the vertices in $B_1$ degree $\frac k2+6$ and the ones in $B_2$ degree $\frac k2+5.$

% We can choose the above in such a way that two vertices in $A_1$ have the same neighbours, i.e. such that there are some twins.

 Finally, we add hyperedges $A$ and $B$, to form a hypergraph $H$ with order $n=2k+2.$

 The degree of the vertices in $A$ is equal to $(k-6)+\frac k2+7+1=\frac{3k}{2}+2$
 and the vertices in $B$ have degree $k-4+\frac k2+6+1=\frac{3k}{2}+3.$
 Hence the graph is not regular (and thus not vertex-transitive),
 The minimum degree of $H$ equals $\frac34 n+\frac 12$ and as a corollary, $\diam(H)=2$ is also immediate.

 Finally, we verify that $H$ is a \v Solt\'es' hypergraph.
 
 For $v \in A_1$, $\sum_{u \in V \setminus v} d_H(u,v)=2k+1+\frac k2-5=\frac{5k}{2}-4.$
 The only distances that increase (by one)
 %(from $H$ to $H \setminus v$, by $1$) 
 correspond with non-edges in $A$ which do not contain $v$, there are $\frac{5k+2}{2}-5=\frac{5k}{2}-4.$
 This implies that $W(H)=W(H \setminus v).$
 The same holds true for $v \in A_2, B_1$ and $B_2$ respectively.

\end{document}